\def\newpic#1{}
\def\mtwo#1#2{\raise .8ex\hbox{
     ${#1_{{\displaystyle #2}}}$}}
\DeclareSymbolFont{extraup}{U}{zavm}{m}{n}
\DeclareMathSymbol{\varheart}{\mathalpha}{extraup}{86}
\DeclareMathSymbol{\vardiamond}{\mathalpha}{extraup}{87}
\DeclareMathSymbol{\varclub}{\mathalpha}{extraup}{88}
\DeclareMathSymbol{\varspade}{\mathalpha}{extraup}{85}
\newtheorem{theorem}{Theorem}
\newtheorem{conjecture}{Conjecture}
\newtheorem{proposition}{Proposition}
\newtheorem{corollary}{Corollary}
\newtheorem{definition}{Definition\!}
\newenvironment{proof}{{\bf Proof.}}{\hspace*{1mm}\hfill\rule{2mm}{2mm}}
\newtheorem{pretheorema}{{\bf Theorem}}
\newtheorem{prelemmab}{{\bf Lemma}}
\newtheorem{prepropositiona}{{\bf Proposition}}
\def\n#1{\vbox to 3mm{\vspace{1mm}\vfill \hbox to 2.0mm{\hfill
$#1$\hfill} \vfill }}
\def\m#1#2{\raise 0.2ex\hbox{
${#1_{  #2}}$}}
\def\x#1{\raise 0.5ex\hbox{
${#1}$}}
\def\Cchi{{\raisebox{.2ex}{\large $\chi$}}}
\journal{Journal of Discrete Mathematics}
\begin{document}

\begin{frontmatter}

\title{On the chromatic number of Latin square graphs}


\author[label1]{Nazli Besharati}
\address[label1]{Department of Mathematical Sciences,  Payame Noor University,
P.O. Box 19395-3697,   Tehran, I. R. Iran}
\author[label2]{Luis Goddyn}
\address[label2]{Department of Mathematics, Simon Fraser University, Burnaby, BC V5A 1S6 Canada}
\author[label3]{ E.S. Mahmoodian}
\author[label3]{M. Mortezaeefar}
\address[label3]{Department of Mathematical Sciences, Sharif University of
Technology, P. O. Box 11155-9415, Tehran, I. R. Iran}


\begin{abstract}


The {\sf chromatic number} of a Latin square is the least number of partial transversals which cover its cells.
This is just the chromatic number of its associated {\sf Latin square graph}.
Although Latin square graphs have been widely studied as strongly regular graphs,
their chromatic numbers appear to be unexplored.
We determine the chromatic number of a circulant Latin square,
and find bounds for some other classes of Latin squares.
With a computer, we find the chromatic number for all main classes of Latin squares of order at most  eight.
\end{abstract}

\begin{keyword} Transversal, Partial transversal, Latin square, Latin square graphs, Net graphs, Coloring,  Cayley table, Orthogonal array, Orthogonal Mate.
 \MSC  05B15- 05C15




\end{keyword}

\end{frontmatter}





%
%
%
%
%
%
%
%
%

\section{Introduction and preliminaries}

Let $\mathbf L$ be a Latin square of order $n$. The \textsf{chromatic number} of  $\mathbf L$, denoted by $\Cchi(\mathbf{L})$,
is the minimum number of partial transversals of $\mathbf L$ which together cover the cells of $\mathbf{L}$.
Since each partial transversal uses at most $n$ of the $n^2$ cells in $\mathbf L$, we observe the following.
\begin{proposition}
Every Latin square $\mathbf L$ of order $n$ satisfies $\Cchi(\mathbf L)\ge n$,
with equality holding if and only if $\mathbf L$ has an orthogonal mate.
\end{proposition}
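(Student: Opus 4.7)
The plan is to split the proof into the cardinality bound $\Cchi(\mathbf L)\ge n$ and the characterization of equality, then identify a decomposition into $n$ transversals with an orthogonal mate.

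For the lower bound, the key observation is that any partial transversal meets each row, each column, and each symbol at most once, so it contains at most $n$ cells. Since the $n^2$ cells of $\mathbf L$ must be covered, a cover by partial transversals requires at least $\lceil n^2/n\rceil=n$ of them. This gives $\Cchi(\mathbf L)\ge n$ immediately, with no further work.

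For the equality clause, I would argue that $\Cchi(\mathbf L)=n$ forces a cover by exactly $n$ partial transversals whose sizes sum to $n^2$. Combined with the upper bound $n$ on each size, every part must have size exactly $n$, i.e., be a full transversal, and the parts must be pairwise disjoint. So $\Cchi(\mathbf L)=n$ is equivalent to $\mathbf L$ admitting a partition of its cells into $n$ transversals $T_1,\dots,T_n$.

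The remaining step, which is the conceptual (but routine) heart of the proof, is to match such a partition with an orthogonal mate. Given $T_1,\ldots,T_n$, define an $n\times n$ array $\mathbf M$ by $\mathbf M(i,j)=k$ whenever $(i,j)\in T_k$. Because each $T_k$ meets every row and every column exactly once, $\mathbf M$ is a Latin square; and because each $T_k$ uses each symbol of $\mathbf L$ exactly once, the pair $(\mathbf L(i,j),\mathbf M(i,j))$ assumes each of the $n^2$ possible values exactly once, so $\mathbf M$ is an orthogonal mate of $\mathbf L$. Conversely, given an orthogonal mate $\mathbf M$, the preimages $T_k=\mathbf M^{-1}(k)$ form $n$ disjoint transversals of $\mathbf L$. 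The main obstacle is merely bookkeeping in this equivalence — unpacking the definition of orthogonal mate so that the bijective correspondence between the $n^2$ pairs $(s,k)$ and the $n^2$ cells is transparent — but no new idea is required.
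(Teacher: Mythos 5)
Your proposal is correct and follows the same route the paper takes: the paper justifies the proposition with the single observation that each partial transversal covers at most $n$ of the $n^2$ cells, and relies on the standard equivalence (stated in its preliminaries) between having an orthogonal mate and decomposing into disjoint transversals. You have simply written out in full the counting argument and the bookkeeping bijection that the paper leaves implicit; no difference in substance.
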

Therefore  $\Cchi(\mathbf L)$ serves as a measure of how close $\mathbf L$ is to having an orthogonal mate.
We are surprised that reference to this natural invariant seems to be absent from the  substantial literature regarding transversals and orthogonality of Latin squares\footnote{See the Addendum for further details}.
An early version of some of our results appears in the M.\ Sc.\ thesis \cite[in Farsi, Persian language]{MortezaeefarThesis} of the fourth author, under the supervision of the third author.
%
%
In this paper, we find some bounds on $\Cchi(\mathbf L)$ for general Latin squares and special classes such as complete Latin squares, Cayley tables of groups, circulants, and all Latin squares of order at most eight.

For the definitions not given here one may refer to~\cite{MR2368647} and~\cite{MR2246267}.
Let ${\bf L}$ be a Latin square of order $n$ with \textsf{cells} $\{ (r,c) \mid r,c \in \{0,1,2,\ldots, n-1\} \}$;
each cell contains a \textsf{symbol} from an alphabet of size $n$, and no row or column of $\textbf{L}$ contains a repeated symbol.
A cell $(r,c)$ containing the symbol $s = \mathbf L_{r,c} $ is sometimes represented by the triple $(r,c,s)$.
A {\sf partial transversal of length $k$} is a set of $k$ cells, where no two cells have the same row, column or symbol.
A {\sf transversal} is a partial transversal of length~$n$.
The Latin square $\mathbf L$ has an \textsf{orthogonal mate} if and only if it has a decomposition into disjoint transversals.  
We say that $\textbf L$ is \textsf{row-complete}
  if every ordered pair of distinct symbols appears (exactly once) in the set
  \begin{align*}
 			&\{ (s, s') \mid (r,c,s), (r,c+1,s') \in \mathbf L,\text{ for $0 \le r \le n-1$ and $0 \le c \le n-2$} \} .
 \end{align*}
 %
 The \textsf{Latin square graph} of $\mathbf L$ is the simple graph $\Gamma(\mathbf L)$ whose vertices are the cells of~$\mathbf{L}$,
 and where distinct cells $(r,c,s)$ and $(r',c',s')$ are adjacent if (exactly) one of the equations $r=r'$, $c=c'$, $s=s'$ is satisfied.
 Accordingly, each edge of  $\Gamma(\mathbf L)$ is called, respectively, a \textsf{row edge}, a  \textsf{column edge} or a \textsf{symbol edge}.
Latin square graphs were introduced by R. C. Bose~\cite{MR0157909} as examples of strongly regular graphs;
see \cite[Section 10.4]{MR1829620} for further discussion.
Bose used the notation $L_3(n)$ for this graph.
 But this notation does not specify the Latin square from which the graph arises. So we use the notation $\Gamma({\bf L})$ for the graph corresponding to the given Latin square ${\bf L}$.
The independent sets of $\Gamma(\mathbf L)$ are the partial transversals of $\mathbf L$,
 and $\Cchi(\mathbf L)$ is the chromatic number of $\Gamma(\mathbf L)$.
 The isomorphism class of $\Gamma(\textbf L)$ is not affected by relabelling the rows, columns or symbols of $\mathbf L$,
 nor is it changed by applying a fixed permutation to the coordinates of every triple $(r,c,s)$ in $\mathbf L$.
 Thus $\Cchi(\mathbf L)$ is an invariant of the \textsf{main class} of $\mathbf L$.

 Let $(G,\circ)$ be a finite group of order $n$.
A \textsf{Cayley table} for $G$ is an $n \times n$ matrix, denoted $L_G$, where the cell $(i,j)$ contains the group element $g_i \circ g_j$,
for some fixed enumeration $ G = \{g_0,\dots , g_{n-1}\} $.
It is easy to see that $L_G$ is a Latin square.
If $G$ is a cyclic group, then $L_G$ is called a \textsf{circulant} Latin square.
Figure \ref{fig: LatinSquare3} shows the graph of the circulant $L_{{\Bbb Z}_3}$.
\begin{figure}[h]
\begin{center}
\includegraphics[scale=0.6]{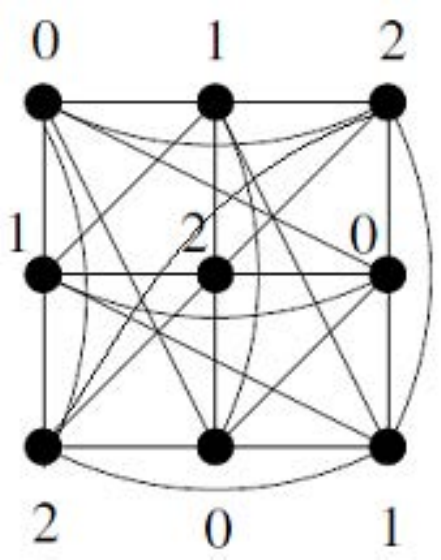}
\caption{The graph {$\Gamma({L_{{\Bbb Z}_3})}$ with each vertex $(r,c,s)$ labelled by $s$.}}
\label{fig: LatinSquare3}
\end{center}
\end{figure}

We summarize the results of this paper.
Let $\mathbf L$ be a Latin square of order $n$.
\begin{itemize}
\item
$n \le \Cchi(\mathbf L) \le 3n-2$.

\item
If $\mathbf L$ is row-complete, then $ \Cchi(\mathbf L) \le 2n$.

\item For large $n$ we have $\Cchi(\mathbf L) = n + o(n)$.

\item
For every  group $G$ of order $n$,  either $\Cchi(L_G) = n$ or $\Cchi(L_G) \ge n+2$.
\item
If $\mathbf L$ is a circulant (that is $\mathbf L \cong L_{{\Bbb Z}_n}$),
then $\Cchi(\mathbf L) = \begin{cases} n & \text{ if $n$ is odd}\\ n+2 & \text{ if $n$ is even.}\end{cases}$

\item
If $n\le8$, then
\begin{equation}\label{ineq}
 \Cchi(\mathbf L) \le \begin{cases} n+1 & \text{ if $n$ is odd}\\ n+2 & \text{ if $n$ is even.}\end{cases}
\end{equation}
\end{itemize}
%
%
We propose the following.
%
%
\begin{conjecture}\label{ques:main}
Every  Latin square $\mathbf L$ satisfies \eqref{ineq}.
\end{conjecture}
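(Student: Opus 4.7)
The plan is to split into the cases $n$ odd and $n$ even, in each case aiming to exhibit a partition of the $n^2$ cells of $\mathbf L$ into $n+1$ (resp.\ $n+2$) partial transversals. Since $\Cchi(\mathbf L) = n$ is equivalent to $\mathbf L$ admitting an orthogonal mate, the natural strategy is to extract as many pairwise-disjoint (near-)transversals as possible and then mop up the residual cells using a bounded number of additional independent sets of $\Gamma(\mathbf L)$.

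For $n$ odd, one would invoke Ryser's conjecture -- or, unconditionally, the Hatami--Shor bound, which produces a partial transversal of length $n - O(\log^2 n)$ -- to peel off a large partial transversal $T_1$, remove it, and iterate. The target is $n-1$ pairwise-disjoint transversals of length $n$, leaving a residue of $n$ cells that must itself form a partial transversal; failing that, one would aim for $n-2$ disjoint transversals with a $2n$-cell residue that splits into two partial transversals. The delicate step is coordinating the successive extractions so that the residue falls into at most two independent sets of $\Gamma(\mathbf L)$.

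For $n$ even, the bound $n+2$ is known to be tight by the circulant case proved above, and when $\mathbf L$ has no transversal at all the pigeonhole count $\lceil n^2/(n-1)\rceil = n+2$ makes the bound unavoidable even in principle. The plan here is to produce $n-1$ pairwise-disjoint near-transversals of length $n-1$, leaving exactly $2n-1$ cells to be covered by three further short partial transversals -- precisely the structure exhibited by ${L_{{\Bbb Z}_n}}$. Generalizing this pattern to arbitrary Latin squares of even order would plausibly combine the Brualdi--Stein near-transversal conjecture with a matching- or hypergraph-type argument in the complement $\overline{\Gamma(\mathbf L)}$ that guarantees the final short classes can be chosen to be pairwise disjoint.

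The main obstacle is that removing a transversal from $\mathbf L$ does not leave a smaller Latin square behind, so a naive induction on $n$ is unavailable, and the asymptotic result $\Cchi(\mathbf L) = n + o(n)$ quoted earlier in the paper already strains the currently available structural and probabilistic toolkit. Closing the gap from $o(n)$ down to a constant $+1$ or $+2$ presumably requires a new ingredient -- perhaps an equitable-coloring variant tailored to the strongly regular graph $\Gamma(\mathbf L)$ (whose spectrum and parameters are fully known), or a structural dichotomy that isolates the ``extremal'' Latin squares such as Cayley tables of groups of even order and handles them separately. The odd case $\Cchi(\mathbf L) \le n+1$ appears to be the harder of the two, as it essentially asserts that every Latin square of odd order is within one transversal of having an orthogonal mate.
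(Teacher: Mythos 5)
The statement you are addressing is Conjecture \ref{ques:main}, which the paper explicitly leaves open: no proof is given there, and the authors point out that the conjecture would imply both Ryser's conjecture (every Latin square of odd order has a transversal) and the Brualdi--Stein conjecture (every Latin square of even order has a partial transversal of length $n-1$), both long-standing open problems. So there is no proof in the paper to compare against, and any correct argument would be a major breakthrough rather than a routine verification.

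Your proposal does not close this gap; it is a roadmap whose individual steps are themselves open or, in one case, impossible. Concretely: (i) for odd $n$ your primary target of $n-1$ pairwise disjoint transversals cannot be met in general, because $n-1$ disjoint transversals leave exactly one cell in each row, column, and symbol class, so the residue is automatically a transversal and $\mathbf L$ would have an orthogonal mate --- yet Latin squares of odd order without orthogonal mates exist (main class 5.2 in Table \ref{tbl: chromatic_number} has chromatic number $6$); (ii) the Hatami--Shor-type bounds produce one long partial transversal, but after deleting it the remaining cells no longer form a Latin square, so the extraction cannot be iterated with any control over the residue --- you acknowledge this obstacle but offer no substitute; (iii) the pigeonhole computation $\lceil n^2/(n-1)\rceil = n+2$ is a lower-bound observation about squares with no transversal and contributes nothing toward the required upper bound; (iv) the closing appeals to an ``equitable-coloring variant'' or a ``structural dichotomy'' name desiderata rather than arguments. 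The honest status is that the conjecture remains open, with the paper's supporting evidence limited to the circulant case (Theorem \ref{thm: cyclic chromatic number}), the asymptotic bound $\Cchi(\mathbf L) = n + o(n)$, and exhaustive computation for $n \le 8$.
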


%
Conjecture \ref{ques:main} would surely be challenging to prove, even for Cayley tables of groups.
Since $(n+1)(n-1)<n^2$ and $(n+2)(n-2)<n^2$,
every Latin square $\mathbf L$ which satisfies \eqref{ineq} must have a transversal (if $n$ is odd) or a partial transversal of length $n-1$ (if $n$ is even).
%
So Conjecture \ref{ques:main} would imply two long-standing  conjectures of Brualdi-Stein
 and  Ryser.
\begin{conjecture}
\label{Brualdi-Stein}
 {\rm (\cite{MR0351850,MR0387083})}
Every Latin square of even order $n$ contains a partial
transversal of length  $n -1$.
\end{conjecture}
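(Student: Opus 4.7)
The plan is to derive Conjecture~\ref{Brualdi-Stein} as a corollary of Conjecture~\ref{ques:main} restricted to even orders, following the observation already recorded in the excerpt: if $\Cchi(\mathbf L) \le n+2$ for every Latin square $\mathbf L$ of even order $n$, then the $n+2$ partial transversals in an optimal colouring partition all $n^2$ cells, and since $(n+2)(n-2)<n^2$ at least one part exceeds $n-2$ cells, hence has size $n-1$ or $n$. So the task reduces to establishing the even case of Conjecture~\ref{ques:main}.

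To prove that bound, I would start from the asymptotic result $\Cchi(\mathbf L)=n+o(n)$ listed in the summary, which furnishes a covering of $\mathbf L$ by $n+f(n)$ pairwise disjoint partial transversals for some $f(n)=o(n)$. The strategy is then an iterative merging procedure on this covering, driven by alternating cycles in $\Gamma(\mathbf L)$: given two partial transversals $T_i,T_j$ with $|T_i|+|T_j|\le n$, consolidate them into a single partial transversal, while shuttling a bounded number of leftover cells into a fixed ``buffer'' of two partial transversals that is maintained throughout. At each merging step the conflict pattern between $T_i\cup T_j$ and the remaining colour classes defines an auxiliary multigraph of maximum degree at most~$2$, so the local consolidation reduces to suitably $2$-colouring a disjoint union of paths and cycles.

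The principal obstacle is preventing the buffer from growing beyond size~$2$ over the course of the merging. This is genuinely delicate, because even-order circulants already attain $\Cchi=n+2$, so the buffer cannot be eliminated and any proof must locate exactly which two excess classes to preserve. A natural warm-up is the Cayley case: for an even-order group $G$ possessing an index-$2$ subgroup $H$, one could split $L_G$ into two $(n/2)\times n$ strips indexed by the cosets of $H$, treat each strip by the odd-order orthogonality theory available there, and patch the two halves together using exactly two correction transversals. Groups without a convenient index-$2$ subgroup, and arbitrary even-order Latin squares, would each demand further ideas — which is precisely why Conjecture~\ref{Brualdi-Stein} has resisted a complete proof.
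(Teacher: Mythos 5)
This statement is not proved in the paper, and it cannot be handled the way you propose: Conjecture~\ref{Brualdi-Stein} is the Brualdi--Stein conjecture, a long-standing \emph{open} problem that the paper merely records (with citations) as a consequence that \emph{would} follow from its own Conjecture~\ref{ques:main}. Your first paragraph correctly reproduces the paper's counting observation --- if $\Cchi(\mathbf L)\le n+2$ then $(n+2)(n-2)<n^2$ forces some colour class to have at least $n-1$ cells --- but this only reduces one open conjecture to a strictly stronger one. The paper itself says Conjecture~\ref{ques:main} ``would surely be challenging to prove, even for Cayley tables of groups,'' precisely because it implies Brualdi--Stein and Ryser. So the logical direction of your argument is backwards relative to what is actually known: the implication runs from the harder unproved statement to the easier unproved one.

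The second half of your proposal, which is where an actual proof would have to live, does not close the gap. The Molloy--Reed bound gives $\Cchi(\mathbf L)\le n+o(n)$, and no known merging or augmenting-path argument compresses an $o(n)$ excess down to a constant; the claim that the conflict pattern between two colour classes and the rest forms ``an auxiliary multigraph of maximum degree at most $2$'' is not justified (a cell of $T_i$ can conflict with cells of $T_j$ through its row, its column, \emph{and} its symbol, and the conflicts with other classes are not controlled at all), and the central difficulty --- keeping the buffer at size exactly $2$ throughout --- is asserted to be ``genuinely delicate'' rather than resolved. You acknowledge as much in your final sentence. The honest summary is that no proof is given here and none exists in the paper: the statement should remain a conjecture, and the only rigorous content salvageable from your write-up is the (already present) observation that Conjecture~\ref{ques:main} implies it.
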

\begin{conjecture}
\label{Ryser}
 {\rm (\cite{hjryser})}
Every Latin square of odd order contains a transversal.
\end{conjecture}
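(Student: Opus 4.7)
Ryser's conjecture is a celebrated, still-open problem, so I cannot pretend to have a proof in hand; what I can offer is an attack plan that fits naturally inside the chromatic-number framework of this paper. The overarching strategy is to approach Conjecture~\ref{Ryser} through Conjecture~\ref{ques:main} in the odd case. As the authors observe, any proper $(n+1)$-coloring of $\Gamma(\mathbf L)$ must contain a color class of full size $n$, because $(n+1)(n-1)<n^2$ forces one part of the partition to be a genuine transversal. It therefore suffices to prove $\Cchi(\mathbf L)\le n+1$ for every Latin square of odd order.

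First I would try to exploit the structure of $\Gamma(\mathbf L)$ as a strongly regular graph of degree $3(n-1)$; however, its spectrum $\{3(n-1),\,n-3,\,-3\}$ yields only the Hoffman bound $\Cchi(\mathbf L)\ge n$, and no purely spectral argument can distinguish odd from even $n$. I would therefore turn to a constructive two-phase scheme. Phase one: invoke the near-transversal results in the Keevash--Pokrovskiy--Sudakov--Yepremyan and Montgomery lineage to extract a transversal or near-transversal, and iterate, in the spirit of the asymptotic estimate $\Cchi(\mathbf L)=n+o(n)$ cited in the summary. Phase two: argue that after stripping off a maximal disjoint family of near-transversals, the uncovered cells form a thin enough residue to be colored by a bounded number of further partial transversals, then squeeze that constant down to $1$ via a Hall-type matching in an auxiliary bipartite graph whose edges record symbol-availability per column.

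The main obstacle is, predictably, the parity barrier that separates Ryser from Brualdi--Stein. The off-by-one between the odd and even regimes is precisely where probabilistic, rectangle-completion, and greedy arguments all stall, none of them being inherently sensitive to the parity of $n$. A plausible route is to import a parity gadget from the classical ${\Bbb Z}_n$ obstruction that rules out transversals for even-order cyclic groups: sum the column-indices of a candidate transversal modulo $n$, or track the sign of the induced symbol permutation, perhaps packaged inside Alon's combinatorial nullstellensatz applied to a polynomial in the $n^2$ cell variables. As a sanity check I would first pressure-test the scheme on Cayley tables of odd-order groups, where the paper already establishes $\Cchi(L_G)=n$, and on odd-order circulants, with a view to isolating which structural feature fails for even $n$ and whether an analogue can be engineered in the general setting.
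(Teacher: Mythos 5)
There is nothing to compare your attempt against: the statement you were assigned is Ryser's conjecture, which the paper records only as Conjecture~\ref{Ryser} and does not prove --- it is a long-standing open problem, cited there precisely because the paper's Conjecture~\ref{ques:main} would imply it. Your submission, as you candidly admit, is not a proof but a programme, and each of its stages contains a genuine gap. The one fully correct step is the counting observation that in a proper $(n+1)$-coloring of $\Gamma(\mathbf L)$ some color class must be a full transversal, since $(n+1)(n-1)=n^2-1<n^2$; this matches the paper's own remark. But that step reduces Ryser to proving $\Cchi(\mathbf L)\le n+1$ for all odd-order Latin squares, i.e.\ to the odd case of Conjecture~\ref{ques:main}, which is a strictly \emph{stronger} open statement --- the authors themselves say it ``would surely be challenging to prove, even for Cayley tables of groups.'' Reversing the direction of an implication does not constitute progress.

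The two substantive mechanisms you propose also fail for identifiable reasons. The asymptotic machinery (the Molloy--Reed bound giving $\Cchi(\mathbf L)=n+o(n)$, and the near-transversal results you cite) loses additive error terms that grow with $n$ and is, as you yourself note, blind to parity; no iteration of ``near-transversals plus a thin residue'' can be squeezed down to the exact $+1$ that separates a transversal from a partial transversal of length $n-1$. That final squeeze is the entire content of Ryser's conjecture, and your Hall-type matching step is asserted, not argued. Finally, the ``parity gadget'' you hope to import from ${\Bbb Z}_n$ --- summing indices of a putative transversal, or tracking permutation signs --- is an artifact of the group structure (it is exactly what underlies the statement that $\Cchi(L_G)\ge n+2$ when an Abelian $G$ has a unique involution) and has no known analogue for general Latin squares; nullstellensatz-style polynomial arguments have been attempted in the literature and stall at precisely this point. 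In short: the reduction is valid but unhelpful, and every step that would actually close the gap is missing.
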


\section{Some Upper Bounds}

%

The graph of a Latin square $\mathbf L$ of order $n$ is regular of degree $3n-3$.
This immediately gives $\Cchi(\mathbf L) \le 3n-2$.
We can improve this bound in case $\mathbf L$ has additional structure.

A {\sf $k$-plex}  is a set of $kn$ cells which has $k$ representatives from each row and each column and each symbol of \textbf{L}. A {\sf $(k_1, k_2,\dots , k_d)$-partition} is a partition $K_1, K_2,\dots,K_d$ where each $K_i$ is a $k_i$-plex.


\begin{proposition}
If a Latin square ${\bf L}$ of order $n$ has a  $(k_1, k_2,\dots , k_d)$-partition,  then
$\Cchi({\bf L}) \leq 3n-2d.$
\end{proposition}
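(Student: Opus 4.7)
The plan is to color each plex of the partition separately, using pairwise disjoint palettes, and add the palette sizes. If I produce a proper coloring of each induced subgraph $\Gamma(\mathbf{L})[K_i]$ with $c_i$ colors, and then take the $d$ palettes to be disjoint, the result is automatically a proper coloring of all of $\Gamma(\mathbf{L})$: every edge either lies inside some single $K_i$ (properly colored by construction) or joins two different plexes (endpoints drawn from disjoint palettes, hence trivially distinct colors). The total number of colors used is $\sum_i c_i$.

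The key step is to bound $\chi(\Gamma(\mathbf{L})[K_i])$. Fix a cell $(r,c,s) \in K_i$. Since $K_i$ has exactly $k_i$ representatives in row $r$, there are $k_i - 1$ other cells of $K_i$ sharing this row with $(r,c,s)$; similarly for column $c$ and for symbol $s$. These three classes of neighbors are pairwise disjoint, because any two distinct cells of $\mathbf{L}$ can agree in at most one of row, column, or symbol (if they agreed in two, the Latin property would force a repeated symbol in some row or column). Hence $\Gamma(\mathbf{L})[K_i]$ is regular of degree $3(k_i - 1)$, and the greedy bound $\chi \le \Delta + 1$ yields $\chi(\Gamma(\mathbf{L})[K_i]) \le 3k_i - 2$.

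Summing over all plexes, and using $\sum_i k_i = n$ (which follows by double-counting: $\sum_i k_i n = n^2$), the total palette size is $\sum_{i=1}^d (3k_i - 2) = 3n - 2d$, as claimed. I do not anticipate a substantial obstacle; the argument is essentially greedy coloring applied block-by-block. The only mild subtlety is the disjointness of the three neighbor classes at a cell, but this is immediate from the Latin property. One could try to sharpen the per-plex bound by Brooks' theorem (potentially saving up to $d$ colors), but this would require checking that no connected component of $\Gamma(\mathbf{L})[K_i]$ is a clique or odd cycle, and such a refinement is not needed for the stated bound.
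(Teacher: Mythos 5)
Your proposal is correct and follows essentially the same route as the paper: observe that each induced subgraph $\Gamma(\mathbf L)[K_i]$ is regular of degree $3k_i-3$, apply the greedy bound to color it with $3k_i-2$ colors, and sum over the $d$ plexes using $\sum_i k_i = n$. The paper states this more tersely, but the extra details you supply (disjointness of the three neighbor classes, disjoint palettes across plexes) are exactly the right justifications.
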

\begin{proof}{
Let
$K_1, K_2,\dots,K_d$ be a $(k_1, k_2,\dots , k_d)$-partition of $\mathbf L$.
For each $i$, the induced subgraph $\Gamma(\mathbf L)[K_i]$ is regular of degree $3k_i-3$,
so it is $(3k_i-2)$-colorable.
Thus, $\Cchi({\bf L}) \leq \displaystyle\sum_{i=1}^d (3k_i-2)= 3n-2d.$
}\end{proof}

Wanless~\cite{MR1912794, MR2866738} has conjectured that
every Latin square of order $n$ has a $(k_1, k_2,\dots , k_d)$-partition where $d \ge \lfloor \frac{n}{2}\rfloor$.
If true, his conjecture would improve the general upper bound to $\Cchi(\mathbf L) \le 2n+1$.
\begin{corollary}
Let $t$ be the maximum number of disjoint transversals in a Latin square ${\bf L}$ of order $n$. Then
$$
    \left\lbrace
        \begin{array}{l l}
             \Cchi({\bf L}) \leq  3n-2t-2 & \qquad if \ t \leq n-2 \quad   \\
             \Cchi({\bf L}) =  n          & \qquad otherwise.
        \end{array}
            \right.
$$
\end{corollary}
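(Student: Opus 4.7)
The plan is to split into the two cases dictated by the statement and reduce each one to results already established in the paper.

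First I would dispense with the ``otherwise'' case. Note that $t \le n-2$ fails exactly when $t \ge n-1$. If there are $n-1$ pairwise disjoint transversals $T_1, \ldots, T_{n-1}$, they cover $n(n-1)$ cells, leaving exactly $n$ cells. These remaining cells must contain exactly one representative in each row, column and symbol (since the transversals account for $n-1$ of the $n$ occurrences in each class), so they themselves form a transversal. Hence $t = n-1$ forces $t = n$, meaning $\mathbf L$ decomposes into $n$ disjoint transversals and therefore admits an orthogonal mate. By Proposition 1 this gives $\Cchi(\mathbf L) = n$.

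For the main case $t \le n-2$, I would use the $t$ disjoint transversals directly as color classes. Each transversal is a partial transversal, hence an independent set in $\Gamma(\mathbf L)$, so $t$ colors suffice to color the $tn$ cells they cover. The remaining $n(n-t)$ cells contain exactly $n-t$ cells from each row, from each column, and with each symbol, so they form an $(n-t)$-plex $K$. By the argument already used in the proof of the preceding proposition, the induced subgraph $\Gamma(\mathbf L)[K]$ is regular of degree $3(n-t)-3$, and hence $(3(n-t)-2)$-colorable. Using a fresh palette of $3(n-t)-2$ colors for $K$ yields a proper coloring of all of $\Gamma(\mathbf L)$ with
\[
t + \bigl(3(n-t)-2\bigr) \;=\; 3n - 2t - 2
\]
colors, which is the claimed bound.

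There is essentially no obstacle here: the argument is a direct application of the previous proposition once one observes the small combinatorial fact that the complement of the union of $t$ disjoint transversals is an $(n-t)$-plex, and the even smaller fact that $n-1$ disjoint transversals always extend to $n$. If any subtlety arises it is merely notational, in making sure the proposition's proof really gives the conclusion for an arbitrary $k$-plex (not just for pieces of a full plex partition), but inspecting that proof shows the argument only uses the $k$-plex property of a single part.
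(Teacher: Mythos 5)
Your proposal is correct and matches the paper's intended derivation: the corollary is stated without explicit proof as a consequence of the preceding proposition, obtained by viewing the $t$ disjoint transversals together with their complement (an $(n-t)$-plex) as a $(1,1,\dots,1,n-t)$-partition with $d=t+1$ parts, which yields $3n-2(t+1)=3n-2t-2$; your direct coloring of each transversal with one color is the same computation. Your handling of the ``otherwise'' case ($t\ge n-1$ forces $t=n$, hence an orthogonal mate and $\Cchi(\mathbf L)=n$ by Proposition 1) is also exactly the standard argument the authors rely on.
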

%

A Latin square ${\bf L}$ of order $n$  is  {\sf row-complete}  (sometimes it is called a Roman square)
 if the ordered pairs $(\mathbf L_{i,j}, \mathbf L_{i,j+1})$ are all
  distinct for $ 0 \leq i \leq n-1$ and $ 0 \leq  j  \leq n - 2$.
Row-complete Latin squares are used
in the design of sequential
experiments \cite{MR0351850}.

\begin{proposition}
\label{Row complete latin}
If ${\bf L}$ is a row-complete
 Latin square of order $n$, then $\Cchi(\mathbf L) \leq 2n$.
\end{proposition}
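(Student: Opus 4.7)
The plan is to construct an explicit $2n$-coloring $\phi$ of the cells of $\mathbf L$, partitioned into two color families $\{0,\ldots,n-1\}$ and $\{n,\ldots,2n-1\}$, and to verify that each color class is a partial transversal using the row-complete property of $\mathbf L$.

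I will define $\phi(r,c) := \mathbf L_{r, c+1}$ when $c$ is even and $c < n-1$ (so $\phi(r,c)$ records the right-neighbor symbol), and $\phi(r,c) := \mathbf L_{r, c-1} + n$ when $c$ is odd (the left-neighbor symbol, shifted into the second family). For odd $n$, the column $c = n-1$ is even but has no right neighbor, so I set $\phi(r, n-1) := \mathbf L_{r, n-2} + n$, grouping these cells into the second family as well. This exhausts every cell and uses exactly $2n$ colors.

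To verify that each color class is a partial transversal, fix $s \in \{0,\ldots,n-1\}$ and write $c_{r,s}$ for the column of symbol $s$ in row $r$. Color class $s$ consists exactly of the cells $(r, c_{r,s}-1)$ for those rows $r$ in which $c_{r,s}$ is odd. Rows are distinct by construction; the columns $c_{r,s}-1$ are distinct across $r$ because symbol $s$ appears in distinct columns in distinct rows (the Latin property); and the symbols $\mathbf L_{r, c_{r,s}-1}$ are distinct because, by row-completeness, the ordered pairs $(\mathbf L_{r, c_{r,s}-1}, s)$ obtained as $r$ varies are pairwise distinct, so their first coordinates are distinct. An analogous argument handles color class $s+n$, where the extra cell $(r, n-1)$ arising in the odd case slots in naturally via the row-complete pair $(\mathbf L_{r,n-2}, \mathbf L_{r,n-1})$.

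The main technical subtlety is the parity boundary in the odd case — the last column has no right neighbor — but merging those cells into the second family is precisely what lets the row-complete bijection still carry the symbol-distinctness argument. Beyond this, the verification is essentially forced once $\phi$ is defined: the entire content of the proof is packaged into the construction of $\phi$ together with a direct appeal to the bijection between row-adjacent cell pairs and ordered pairs of distinct symbols that row-completeness provides.
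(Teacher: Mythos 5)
Your proof is correct, and its engine is the same as the paper's: color a cell by the symbol of a horizontally adjacent cell, and use row-completeness to conclude that each color class has pairwise distinct symbols (the rows and columns being distinct by the Latin property). Where you differ is in the decomposition. The paper colors \emph{every} cell $(i,j)$ with $j\le n-2$ by its right neighbor's symbol $\mathbf L_{i,j+1}$, which already yields $n$ partial transversals covering all but the last column, and then simply spends $n$ fresh singleton colors on the last column. You instead split the columns by parity, using right-neighbor colors $\{0,\dots,n-1\}$ on even columns and left-neighbor colors $\{n,\dots,2n-1\}$ on odd columns, with a patch for column $n-1$ when $n$ is odd. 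Both give exactly $2n$, so your extra bookkeeping (and the parity checks needed to verify that the patched cell $(r_0,n-1)$ clashes with nothing in class $s+n$ --- its column is even while the others are odd, its row has $c_{r_0,s}=n-2$ odd while the others require $c_{r,s}$ even, and its symbol is separated by the row-complete pair $(s,\mathbf L_{r_0,n-1})$) buys nothing over the paper's one-line version; on the other hand it does produce a coloring with no singleton classes, which is mildly more satisfying even if it does not improve the bound.
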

\begin{proof}
We color each cell $\mathbf L_{i,j}$,  $ 0 \leq i \leq n-1$ and $ 0 \leq  j  \leq n - 2$,  with the entry of $\mathbf L_{i,j+1}$.
By the definition of row-complete, the cells with color $c$ are a  partial transversal  in ${\bf L}$.
We color the last column of ${\bf L}$
with $n$ new colors.
This gives $\Cchi(\mathbf L) \leq 2n$.
See Figure \ref{fig:ls_complete} for an example.
\end{proof}
%
%

\begin{figure}[h]
\begin{center}
\def\arraystretch{1.0}
\begin{tabular}
{|@{\hspace{2pt}}c@{\hspace{1.5pt}}|@{\hspace{1.5pt}}c@{\hspace{1.5pt}}|@{\hspace{1.5pt}}c@{\hspace{1.5pt}}|@{\hspace{1.5pt}}c@{\hspace{1.5pt}}|}\hline
\m{0}{1}   &\m{1}{3}     &\m{3}{2}     & \m{2}{\textcolor{red}4} \\ \hline
\m{1}{2}   &\m{2}{0}     &\m{0}{3}     & \m{3}{\textcolor{red}5} \\ \hline
\m{2}{3}   &\m{3}{1}     &\m{1}{0}     & \m{0}{\textcolor{red}6} \\\hline
\m{3}{0}   &\m{0}{2}     &\m{2}{1}     & \m{1}{\textcolor{red}7}  \\\hline
\end{tabular}
\caption{Subscripts indicate an $8$-coloring of row-complete Latin square of order $4$.}
\label{fig:ls_complete}
\end{center}
\end{figure}


The above bounds are far from optimal for large Latin squares.
Let $V$ be the disjoint union $R \cup C \cup S$ where $R$, $C$, $S$ is the set of rows, columns, and symbols of a Latin square $\mathbf L$ of order $n$.
Let $\mathcal H = (V,E)$ be the $3$-uniform $3$-partite hypergraph, where there is a hyperedge $\{r,c,s\} \in E$ for every cell $(r,c,s) \in \mathbf L$.
Then $\mathcal H$ is a {\sf linear} hypergraph i.e., no two hyperedges share more than one vertex, and $\mathcal H$ is regular of degree $n$.
Also $\Cchi(\mathbf L)$ is the {\sf chromatic index} of $\mathcal H$, the least number of colors needed to color $E$ so that no two  adjacent hyperedges get the same color.  The following general result of Molloy and Reed \cite{MR1801140} implies an asymptotically optimal bound for~$\Cchi(\mathbf L)$.
\begin{theorem}
For every $k$ there exists a constant $c_k$ such that every $k$-uniform linear hypergraph of maximum degree $n$ has chromatic index at most
$n + c_k(\log n)^4 n^{1-1/k}$.
\end{theorem}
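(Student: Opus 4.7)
The plan is to prove this via the semi-random \emph{Rödl nibble} method combined with the Lovász Local Lemma, following the Kahn--Molloy--Reed framework. The aim is to construct an iterative procedure that, in each round, properly colors a large fraction of the remaining hyperedges, reducing the residual maximum degree geometrically while preserving a crucial ``quasi-randomness'' condition on codegrees and list availabilities. The fact that $\mathcal H$ is \emph{linear}---any two vertices co-occur in at most one hyperedge---is what drives the savings: it forces the conflict events between adjacent hyperedges to behave nearly independently, which is what lets us beat the trivial $(k-1)n + 1$ bound from greedy coloring.

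First I would fix a palette of $N := n + c_k(\log n)^4 n^{1-1/k}$ colors and set up a single iteration. Starting from a partial proper coloring with residual maximum degree $D$, for each uncolored hyperedge $e$ independently pick a tentative color uniformly from its current list of available colors, then \emph{activate} $e$ only if none of the (at most) $k(D-1)$ hyperedges sharing a vertex with $e$ picked the same color. Linearity ensures that the events ``$e'$ picks color $c$'' for $e' \ne e$ through distinct vertices of $e$ are essentially independent, so the probability $e$ is successfully colored is $\Theta(1)$, and for each vertex $v$ the expected residual degree drops by a constant factor. Second, I would prove concentration: apply Talagrand's inequality to show that, at every vertex, the residual degree is within $O(\sqrt{D \log D})$ of its expectation, and at every uncolored edge, the list size is similarly concentrated. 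The Lovász Local Lemma, using the low codependency afforded by linearity, combines these per-vertex and per-edge events into a single partial coloring satisfying all the desired bounds simultaneously.

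Third, I would iterate. After each round the degree drops from $D$ to at most $(1-\epsilon_k)D$ (up to a $O(\sqrt{D \log D})$ error), and crucially the linear-hypergraph structure is inherited by the uncolored hypergraph (codegree stays bounded by one). After $\Theta(\log n)$ rounds, the maximum degree is reduced to $D^* = O((\log n)^a n^{1-1/k})$ for some modest $a$. At that point I would finish deterministically: each remaining uncolored hyperedge has at most $k(D^* - 1)$ colored neighbors, so a greedy argument, or a final invocation of the Local Lemma against the still-large palette, completes the coloring within the budget $N$.

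The hardest step will be the concentration analysis and the bookkeeping that pins down the exponent of the polylogarithmic factor as $4$ (rather than some worse power). Talagrand's inequality requires a careful choice of Lipschitz and certifiability parameters because changing a single tentative color can cascade into several conflict events; controlling the accumulated ``slack'' across $\Theta(\log n)$ iterations, and ensuring that the per-round errors sum to at most $c_k(\log n)^4 n^{1-1/k}$, is where all the technical weight lies. Once that is in place, the linearity assumption does the rest by keeping conflict events nearly independent throughout the process.
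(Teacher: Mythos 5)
This theorem is not proved in the paper at all: it is stated as ``the following general result of Molloy and Reed'' and cited from their paper \emph{Near-optimal list colorings}, then used as a black box to deduce $\Cchi(\mathbf L)\le n+o(n)$. So there is no in-paper argument to compare yours against; the relevant benchmark is the Molloy--Reed proof itself. Your outline does identify the correct strategy for that proof --- the semi-random (R\"odl nibble) iteration, Talagrand-type concentration for residual degrees and list sizes, and the Lov\'asz Local Lemma to stitch the local events together, with linearity keeping codegrees bounded by one so that conflict events stay nearly independent across rounds.

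That said, what you have written is a road map, not a proof, and the gap sits exactly where you admit ``all the technical weight lies.'' The entire content of the theorem --- in particular the specific error term $c_k(\log n)^4 n^{1-1/k}$ --- comes out of the deferred bookkeeping: you must (i) show that after each round the available lists remain large enough relative to the residual degree that the $\Theta(1)$ retention probability survives the iteration (this is an inductive invariant, not a one-round computation); (ii) control the accumulated additive concentration errors over $\Theta(\log n)$ rounds so that they sum to $O((\log n)^a n^{1-1/k})$ with a usable exponent $a$; and (iii) verify that the final cleanup phase fits inside the leftover palette. Items (i) and (ii) constitute the bulk of the Molloy--Reed paper and cannot be waved through. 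A minor slip: the trivial greedy bound for a $k$-uniform hypergraph of maximum degree $n$ is $k(n-1)+1$ colors (each hyperedge meets at most $k(n-1)$ others), not $(k-1)n+1$. As it stands, the proposal correctly locates the known proof but does not itself establish the statement; for the purposes of this paper the appropriate move is the one the authors make, namely to cite Molloy and Reed.
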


\begin{corollary}
As $n \to \infty$ every Latin square $\mathbf L$ of order $n$ satisfies $\Cchi(\mathbf L) \le n + o(n)$.
\end{corollary}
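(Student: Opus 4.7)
The plan is to apply the Molloy–Reed theorem directly to the hypergraph $\mathcal H$ already constructed in the paragraph preceding the theorem. So I would take $k = 3$, and my first task is to verify that $\mathcal H$ satisfies the two hypotheses. Uniformity is immediate since every cell $(r,c,s)$ contributes the three-vertex hyperedge $\{r,c,s\}$, with one vertex in each of the parts $R$, $C$, $S$. Linearity is equivalent to the defining property of a Latin square: any two of the coordinates $r,c,s$ determine the third, so two distinct hyperedges cannot share two vertices. Finally, regularity of degree $n$ holds because each row, each column, and each symbol appears in exactly $n$ cells of $\mathbf L$.

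Having checked the hypotheses, I would invoke the identification $\Cchi(\mathbf L) = \chi'(\mathcal H)$ recorded in the paragraph above the theorem (a coloring of $E$ in which adjacent hyperedges receive different colors is the same as a partition of the cells of $\mathbf L$ into partial transversals). Plugging $k=3$ into the theorem then yields
\[
\Cchi(\mathbf L) \;=\; \chi'(\mathcal H) \;\le\; n + c_3\,(\log n)^4\, n^{\,2/3}.
\]
Since $(\log n)^4 n^{2/3} = o(n)$ as $n \to \infty$, this gives $\Cchi(\mathbf L) \le n + o(n)$, as required.

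I do not foresee any real obstacle: the heavy lifting is done by the Molloy–Reed theorem, and all the combinatorial preparation (building $\mathcal H$, identifying $\Cchi(\mathbf L)$ with its chromatic index) has already been carried out in the text preceding the statement. The only point to state carefully is that the error term, though super-polylogarithmic, is still $o(n)$ because the exponent $1 - 1/k = 2/3$ is strictly less than $1$.
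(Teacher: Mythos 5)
Your proposal is correct and follows exactly the route the paper intends: the hypergraph $\mathcal H$, its $3$-uniformity, linearity, and $n$-regularity, and the identification $\Cchi(\mathbf L)=\chi'(\mathcal H)$ are all set up in the paragraph before the Molloy--Reed theorem, and the corollary is just the case $k=3$ with the observation that $c_3(\log n)^4 n^{2/3}=o(n)$. Your write-up simply makes explicit the verification the paper leaves implicit; there is nothing to add.
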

\section{Chromatic Number of Cayley Tables}

The set of finite groups $G$ for which $\Cchi(L_G) = |G|$ has been recently characterized.
\begin{theorem} \label{thm: admissible groups}
For any finite group $G$ of order $n$
with identity element $\epsilon$,
the following are equivalent.
\begin{enumerate}
\item $\Cchi(L_G) = n$
\item $\Cchi(L_G) \le n+1$.
\item $L_G$ has a transversal.
\item For some enumeration $g_1, g_2, \dots, g_n$ of $G$ we have $g_1 g_2 \dots g_n = \epsilon$.
\item Every Sylow 2-subgroup of G is  either trivial or non-cyclic.
\end{enumerate}
\end{theorem}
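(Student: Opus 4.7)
The plan is to establish the five conditions as equivalent via the short cycle (1) $\Rightarrow$ (2) $\Rightarrow$ (3) $\Rightarrow$ (1), together with the separate chain (3) $\Leftrightarrow$ (4) $\Leftrightarrow$ (5). The implication (1) $\Rightarrow$ (2) is immediate. For (2) $\Rightarrow$ (3) I would use a pigeonhole argument: if $L_G$ is covered by $n+1$ partial transversals and each has size at most $n-1$, the total coverage is at most $(n+1)(n-1) = n^2-1 < n^2$, a contradiction. So some partial transversal must have size exactly $n$ and is hence a full transversal. (This argument works for any Latin square, not just Cayley tables.)

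For (3) $\Rightarrow$ (1) the group structure is crucial. Given a transversal $T = \{(g, c(g), g\cdot c(g)) : g \in G\}$ of $L_G$, I would consider the left translates $hT := \{(hg,\, c(g),\, hg\cdot c(g)) : g \in G\}$ for $h \in G$. Each $hT$ is itself a transversal, since $g \mapsto hg$ and $g \mapsto hg \cdot c(g)$ are bijections of $G$, while the multiset of columns is unchanged. Moreover, in any column $c_0 = c(g_0)$ the cell of $hT$ lies in row $hg_0$, and as $h$ varies over $G$ these $n$ rows are all distinct. Hence $\{hT : h \in G\}$ partitions the $n^2$ cells of $L_G$ into $n$ pairwise disjoint transversals, i.e.\ yields an orthogonal mate; combined with the general lower bound from Proposition~1, this gives $\Cchi(L_G) = n$.

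The remaining equivalences (3) $\Leftrightarrow$ (4) $\Leftrightarrow$ (5) are classical and I would cite rather than reprove them. A transversal of $L_G$ corresponds to a \emph{complete mapping} of $G$, and Paige showed that $G$ admits a complete mapping if and only if its elements can be enumerated with product equal to $\epsilon$; this yields (3) $\Leftrightarrow$ (4). The equivalence (4) $\Leftrightarrow$ (5) is the Hall--Paige conjecture: the forward implication was established by Hall and Paige using character-theoretic arguments, and the converse was completed comparatively recently by Wilcox, Evans, and Bray via the classification of finite simple groups.

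The main obstacle is of course the Hall--Paige direction (5) $\Rightarrow$ (3), which is deep and depends on the classification; but since it has been settled in the quoted works, the genuinely new content of the theorem reduces to the two short arguments (2) $\Rightarrow$ (3) via pigeonhole and (3) $\Rightarrow$ (1) via translation by the group.
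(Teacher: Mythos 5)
Your proposal is correct and follows essentially the same route as the paper: $(1)\Rightarrow(2)$ trivially, $(2)\Rightarrow(3)$ by the counting argument $(n+1)(n-1)<n^2$, $(1)\Leftrightarrow(3)$ by the classical translation-of-a-transversal argument (which you spell out while the paper merely calls it classic), and $(3)\Leftrightarrow(4)\Leftrightarrow(5)$ by citation. One attribution slip worth fixing: Paige proved only $(3)\Rightarrow(4)$, and the converse $(4)\Rightarrow(3)$ is precisely the deep CFSG-dependent content, so the paper files $(3)\Leftrightarrow(4)$ under the Hall--Paige conjecture (resolved by Bray, Evans and Wilcox) and attributes the elementary equivalence $(4)\Leftrightarrow(5)$ to D\'enes and Keedwell rather than to Hall--Paige.
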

\begin{proof}
The equivalence of statements 1.\ and 3.\ is classic, while the equivalence of 4.\ and 5.\ was shown in~\cite{MR988510}.
The equivalence of 3.\ and 4.\ was conjectured by Hall and Paige~\cite{MR0079589},
and proved by Bray, Evans and Wilcox using the classification of finite simple groups (see \cite{MR2469351}).
Trivially, 1.\ implies 2.
Also,  2.\ implies 3., as per the discussion before Conjecture~\ref{Brualdi-Stein}~\end{proof}.

\begin{corollary}
\label{cor: admissible groups}
We have
\begin{enumerate}
\item
 $\Cchi(L_G)=|G|$ for every group $G$ of odd order {\rm (}also see~{\rm\cite{MR1130611})}.
\item
For every  group $G$ of order $n$,  either $\Cchi(L_G) = n$ or $\Cchi(L_G) \ge n+2$.
\item
Let $G$ be an Abelian group of order $n$,
 $\Cchi(L_G)\ge n+2$ if and only if $G$ has a unique element of order $2$ {\rm (}also see~{\rm\cite{MR0020990})}.
 \end{enumerate}
 \end{corollary}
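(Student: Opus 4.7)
The plan is to derive each of the three parts as an immediate consequence of the equivalences already recorded in Theorem \ref{thm: admissible groups}, supplemented by one bit of elementary Sylow theory in part 3. No part requires a separate combinatorial argument on the Cayley table itself; the heavy lifting is all absorbed into the Hall--Paige conjecture, whose resolution is invoked in Theorem \ref{thm: admissible groups}.

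For part 1, I would simply observe that if $|G|$ is odd then every Sylow $2$-subgroup of $G$ is trivial, so condition (5) of Theorem \ref{thm: admissible groups} is vacuously satisfied. Applying the equivalence of (5) and (1) gives $\Cchi(L_G)=|G|$ at once.

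For part 2, I would read off the equivalence of conditions (1) and (2) in Theorem \ref{thm: admissible groups}: the inequality $\Cchi(L_G)\le n+1$ already forces $\Cchi(L_G)=n$. So the value $n+1$ is never attained, and either $\Cchi(L_G)=n$ or $\Cchi(L_G)\ge n+2$.

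For part 3, the plan is to combine part 2 with the equivalence (1)$\Leftrightarrow$(5) of Theorem \ref{thm: admissible groups}: the inequality $\Cchi(L_G)\ge n+2$ holds precisely when condition (5) fails, i.e.\ precisely when $G$ has a Sylow $2$-subgroup that is both nontrivial and cyclic. Because $G$ is abelian, there is a unique Sylow $2$-subgroup $P$, namely the subgroup of all elements of $2$-power order, and in particular every element of order $2$ in $G$ lies in $P$. Writing $P\cong C_{2^{a_1}}\times\cdots\times C_{2^{a_k}}$ via the structure theorem for finite abelian $2$-groups, the number of involutions in $P$ equals $2^k-1$. Hence $P$ is nontrivial and cyclic exactly when $k=1$, exactly when $G$ has a unique element of order $2$.

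The only genuinely non-tautological step is the translation in part 3 between ``the (unique) Sylow $2$-subgroup is nontrivial and cyclic'' and ``there is a unique involution,'' and that step is wholly standard. Consequently I do not foresee any real obstacle: the main work in the statement is hidden inside Theorem \ref{thm: admissible groups}, which is taken as given.
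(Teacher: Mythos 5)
Your derivation is correct and matches the paper's intent: the corollary is stated there without a separate proof precisely because each part follows immediately from the equivalences in Theorem~\ref{thm: admissible groups}, exactly as you argue (with the standard Sylow/involution translation supplying part~3). No discrepancies.
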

The rest of this section is devoted to proving that the lower bound of Corollary~\ref{cor: admissible groups}(3)
 is tight for cyclic groups.

\begin{theorem} \label{thm: cyclic chromatic number}
For $n\ge1$ the circulant Latin square of order $n$ satisfies
\[
\Cchi(L_{{\Bbb Z}_n}) = \begin{cases}
			n     & \text{if $n$ is odd}\\
			n+2 & \text{if $n$ is even.}
			\end{cases}
\]
\end{theorem}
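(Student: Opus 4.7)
The odd case is immediate from Corollary~\ref{cor: admissible groups}(1): for odd $n$ the Sylow $2$-subgroup of ${\Bbb Z}_n$ is trivial, so $L_{{\Bbb Z}_n}$ has a transversal and $\Cchi(L_{{\Bbb Z}_n}) = n$; explicitly, the $n$ diagonals $\{(r, r+d, 2r+d) : r \in {\Bbb Z}_n\}$ partition the cells into transversals because $r \mapsto 2r$ is a bijection on ${\Bbb Z}_n$ when $n$ is odd. For even $n = 2m$, Corollary~\ref{cor: admissible groups}(3) gives the lower bound $\Cchi(L_{{\Bbb Z}_n}) \ge n+2$, since ${\Bbb Z}_n$ is abelian with the unique order-$2$ element $n/2$.

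The substantive task is the upper bound $\Cchi(L_{{\Bbb Z}_n}) \le n+2$ for even $n$, which I would establish by exhibiting an explicit partition of the $n^2$ cells into $n+2$ partial transversals. The obstruction is visible in the naive $n$-coloring $f(r,c) = r - c \bmod n$, which is injective on rows and columns but collides on each symbol class $r + c \equiv s \pmod n$: the ``twin'' cells $(r, s-r)$ and $(r+m, s-r-m)$ share both the symbol $s$ and the color $2r - s$, and these are precisely the obstructions inflicted by the unique order-$2$ element. My target decomposition is $n$ near-transversals $T_0, \ldots, T_{n-1}$ of size $n-1$ together with two correction partial transversals $T_\alpha, T_\beta$ of size $m$, summing to $n(n-1) + 2m = n^2$ cells. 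I would construct $T_k$ as a ``bent near-diagonal'' that follows the diagonal $D_k = \{(r, r+k, 2r+k)\}$ for part of the rows and swerves to an adjacent diagonal $D_{k\pm 1}$ for the rest, so that each $T_k$ has distinct rows, columns and symbols; the bend rule is coordinated across $k$ so that the $T_k$'s are pairwise disjoint and their union leaves exactly $n$ residual cells forming $T_\alpha \sqcup T_\beta$.

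The main obstacle is designing this bend rule so as to satisfy all three requirements simultaneously: each $T_k$ is a partial transversal, the $T_k$'s are pairwise disjoint, and the $n$ residual cells have enough spread across columns and symbols to split into two $m$-sized partial transversals. I expect the argument to require case analysis based on $n \bmod 4$ (equivalently, the parity of $m$), since the twin-pair structure distributes differently for $n \equiv 0 \pmod 4$ than for $n \equiv 2 \pmod 4$. The small cases $n = 2, 4$ can be verified by direct inspection, and the general construction then extends with appropriate care about the parity of $m$.
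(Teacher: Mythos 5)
Your treatment of the odd case and of the lower bound for even $n$ is correct and matches the paper. But for the upper bound $\Cchi(L_{{\Bbb Z}_{2m}}) \le 2m+2$ --- which is the entire substance of the theorem --- you have written a plan, not a proof. You correctly identify the target numerology ($n$ partial transversals of size $n-1$ plus two of size $m$, totalling $n^2$ cells) and the source of the obstruction (the twin cells $(r,s-r)$ and $(r+m,s-r-m)$ forced by the unique element of order $2$), but the ``bend rule'' that is supposed to produce the decomposition is never defined. You explicitly defer all three load-bearing verifications: that each bent near-diagonal is a partial transversal, that the $n$ of them are pairwise disjoint, and that the $n$ residual cells split into two partial transversals of size $m$. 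The last point is especially delicate --- the residual cells must avoid repeated rows, columns \emph{and} symbols within each half --- and nothing in your sketch guarantees it. Saying ``I expect the argument to require case analysis based on $n \bmod 4$'' and ``the general construction then extends with appropriate care'' is precisely the part of the theorem that needs to be proved.

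For comparison, the paper realizes essentially the same decomposition shape but by a concrete route: it pairs consecutive right diagonals into $2$-plexes $M_j = T_{2j} \cup T_{2j+1}$, shows each induced subgraph $\langle M_j\rangle$ is a M\"obius ladder of order $4m$, and observes that deleting a \emph{nearly antipodal} pair of vertices from a M\"obius ladder leaves a bipartite (hence $2$-colorable) graph. The $2m$ deleted cells are specified explicitly --- $x_j=(j,3j,4j)$ together with a translate $x_j'=x_j+(d_r,d_c,d_s)$ where the shift depends on $m \bmod 3$, plus a further translate by $(0,2k,2k)$ --- and a short residue computation (rows lie in disjoint intervals, columns are separated mod $3$, symbols are separated by parity) verifies that they form two partial transversals $X\cup X'$ and $Y\cup Y'$. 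That residue computation is the analogue of the verification you have omitted. If you want to salvage your approach, you must write down the bend rule explicitly and carry out the corresponding checks; as it stands the proposal has a genuine gap at its center.
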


Our proof of Theorem~\ref{thm: cyclic chromatic number} utilizes the following class of graphs.
\begin{definition}
The {\sf  M\"{o}bius ladder of order $2n$} is the cubic graph $M$ obtained from a cycle $C$ of length $2n$ by adding $n$ new edges, each connecting an opposite pair of vertices of $C$. The cycle $C$ is called the {\sf rim} of $M$, and the added edges are called the {\sf rungs} of $M$.
{\rm(}See Figure~{\rm \ref{fig:Z8Mobius6}} and Figure~{\rm \ref{fig:Z8Mobius}(b))}.
Two vertices of $M$ are said to be {\sf nearly antipodal} in $M$ if they are at distance $n-1$ in $C$.
\end{definition}
\begin{figure}[ht]
\begin{center}
\adjustbox{scale=.9}{

} 
\includegraphics[scale=.70]{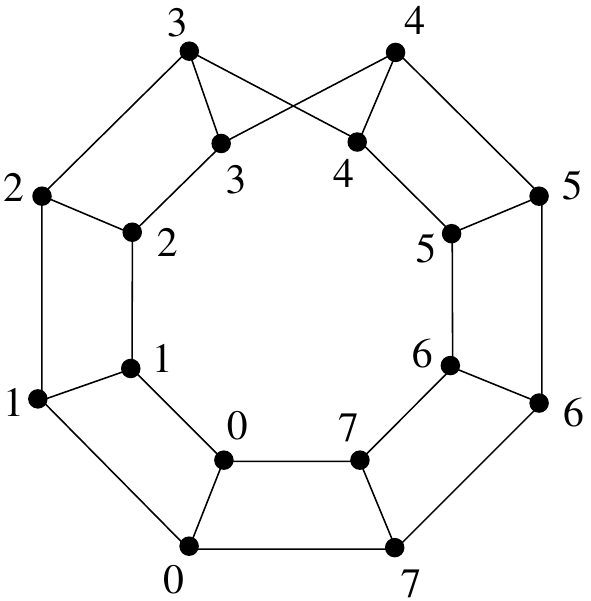}
\caption{{A M\"obius ladder of order 16.}}
\label{fig:Z8Mobius6}
\end{center}
\end{figure}
The following proposition is  straightforward to verify.
\begin{proposition}
\label{prop:bipartite}
If $x$, $x'$ are nearly antipodal vertices of a M\"obius ladder $M$, then $M-\{x,x'\}$ is bipartite.
\end{proposition}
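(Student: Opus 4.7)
The plan is to give an explicit 2-coloring of $M-\{x,x'\}$, exploiting the observation that removing a pair of nearly antipodal vertices severs the ``twist'' that makes $M$ non-bipartite for even $n$. First I would set up cyclic coordinates on the rim $C$: label its vertices $0,1,\ldots,2n-1$ in order, so rim edges are $\{i,i+1\}$ (indices mod $2n$) and the $n$ rungs are $\{i,i+n\}$ for $i=0,\ldots,n-1$. Using the vertex-transitivity and reflection symmetry of $M$, I may assume without loss of generality that $x=0$ and $x'=n-1$.

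Removing these two vertices, the rim breaks into two arcs $P_1=1,2,\ldots,n-2$ and $P_2=n,n+1,\ldots,2n-1$; the rungs $\{0,n\}$ and $\{n-1,2n-1\}$ disappear, while the rungs $\{i,i+n\}$ for $i=1,\ldots,n-2$ survive. (In particular, $n$ and $2n-1$ become pendant vertices, which is harmless for bipartiteness.) I would then define $c\colon V(M)\setminus\{x,x'\}\to\{0,1\}$ by
\[
c(v)=\begin{cases} v \bmod 2 & \text{if } v\in P_1,\\ (v+n+1)\bmod 2 & \text{if } v\in P_2,\end{cases}
\]
and verify that this is a proper 2-coloring. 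Each rim edge within $P_1$ or $P_2$ joins consecutive integers and thus gets opposite colors; and each surviving rung $\{i,i+n\}$ with $1\le i\le n-2$ receives colors $i\bmod 2$ and $(i+2n+1)\bmod 2=(i+1)\bmod 2$, which are again opposite. Extending $c$ arbitrarily on the two pendants $n$ and $2n-1$ (opposite to the color of their unique neighbor) completes the 2-coloring.

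The only delicate point is choosing a single formula that handles both parities of $n$: the naive parity coloring $c(v)=v\bmod 2$ succeeds on the rim but fails on the rungs when $n$ is even, whereas the shift by $n+1$ applied on $P_2$ absorbs exactly this twist, so no case split is required. Beyond this indexing care, the verification is mechanical, matching the paper's assertion that the proposition is ``straightforward to verify''. There is no combinatorial obstacle; the content of the proposition is really just that the removed pair cuts every odd cycle of $M$, which an explicit coloring witnesses directly.
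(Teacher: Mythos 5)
Your proof is correct. The paper itself gives no proof of this proposition (it is stated as ``straightforward to verify''), and your explicit $2$-coloring --- with the WLOG normalization $x=0$, $x'=n-1$ justified by the rotation and reflection symmetries of $M$, and the parity shift by $n+1$ on the long arc to fix the rungs when $n$ is even --- is a complete and valid verification; the only cosmetic quibble is that your closing sentence about ``extending $c$ to the pendants $n$ and $2n-1$'' is redundant, since your formula on $P_2$ already assigns them colors consistent with their unique remaining neighbors.
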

%



Let $L_{{\Bbb Z}_{n}} = \{  (r,c,s) \in {\Bbb Z}_n \times {\Bbb Z}_n \times {\Bbb Z}_n \mid s = r+c \pmod n \}$ be the circulant Latin square of order $n$.
For $0 \le i < n$, we define the \textsf{$i$th right diagonal} of $L_{{\Bbb Z}_{n}}$ to be
the set  $T_i = \{(r, r+i, 2r+i) \      |   \ 0 \leq r \leq {n}-1 \}$.
\begin{proposition}\label{prop:moeb}
For $0 \le i < n$, the subgraph $\langle T_{i} \cup T_{i+1} \rangle$ of $\Gamma(L_{{\Bbb Z}_{n}})$ induced by the $2$-plex $ T_{i} \cup T_{i+1}$
is isomorphic to the  M\"{o}bius ladder of order $2n$.
Moreover if $n=2m$ is even, then every cell $x \in T_{i}$  is nearly antipodal to each of the cells $x' = x + (m, m+1, 1)$ and $x'' = x + (m-1, m, -1)$.
\end{proposition}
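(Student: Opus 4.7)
My plan is to exhibit an explicit bijection $\phi:\mathbb{Z}_{2n}\to T_i\cup T_{i+1}$ under which the rim of the sought Möbius ladder corresponds to the row and column edges of $\langle T_i\cup T_{i+1}\rangle$, while the rungs correspond to the symbol edges. For $k\in\mathbb{Z}_n$ I set
\[
v_{2k} \;=\; (k,\,k+i,\,2k+i)\in T_i, \qquad v_{2k+1} \;=\; (k,\,k+i+1,\,2k+i+1)\in T_{i+1}.
\]

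The first step is to read off the row and column edges inside $\langle T_i\cup T_{i+1}\rangle$. Row $k$ meets $T_i\cup T_{i+1}$ exactly in $\{v_{2k},v_{2k+1}\}$, giving the edge $v_{2k}v_{2k+1}$. Column $k+i$ meets this set exactly in $\{v_{2k},v_{2k-1}\}$ (since the $T_{i+1}$-cell with that column is indexed by $k-1$), giving the edge $v_{2k-1}v_{2k}$. Taken together these are precisely the edges $v_j v_{j+1}$ for $j\in\mathbb{Z}_{2n}$, forming the cycle $C:v_0v_1\cdots v_{2n-1}v_0$ of length $2n$, which will serve as the rim.

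The second step is to show that each $v_j$ is symbol-adjacent to exactly one other vertex of $T_i\cup T_{i+1}$, namely $v_{j+n}$. This is a short parity case analysis on the equations $2k'\equiv 2k\pmod n$ and $2k'\equiv 2k\pm 1\pmod n$: when $n$ is odd, the first has the unique solution $k'=k$ and the second has the unique solution $k'=k\pm(n+1)/2$, so the symbol partner of $v_{2k}$ lies in $T_{i+1}$ and vice versa; when $n=2m$ is even, the first has the two solutions $k'=k,k+m$ and the second has none, so the symbol partner of each $v_{2k}$ (respectively $v_{2k+1}$) stays inside $T_i$ (respectively $T_{i+1}$). In every case the partner of $v_j$ works out to $v_{j+n}$. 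This yields $n$ antipodal chords of $C$, so the induced subgraph is cubic and agrees with the Möbius ladder of order $2n$.

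For the ``moreover'' clause, with $n=2m$ and $x=(r,r+i,2r+i)=v_{2r}\in T_i$, a direct substitution (using $2m\equiv 0\pmod n$) gives
\[
x+(m,m+1,1) \;=\; v_{2(r+m)+1} \;=\; v_{2r+n+1}, \qquad x+(m-1,m,-1) \;=\; v_{2(r+m-1)+1} \;=\; v_{2r+n-1},
\]
each of which is at distance $n-1$ from $v_{2r}$ along $C$ and thus nearly antipodal to $x$. The only real technicality is the parity split in step two; every other step is bookkeeping inside the parameterization, reducing ultimately to inverting $2$ modulo $n$.
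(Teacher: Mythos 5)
Your proof is correct and follows essentially the same route as the paper's: the rim is the cycle of row and column edges, and each symbol edge is shown (by solving for the unique other cell of $T_i\cup T_{i+1}$ with the same symbol) to join antipodal rim vertices, with the nearly antipodal claim reduced to an index computation. The paper packages the parity analysis into the single formula $(r',c')=(r+\lfloor n/2\rfloor,\,c+\lceil n/2\rceil)$ and verifies the ``moreover'' clause via adjacency to the rung partner $x+(m,m,0)$, but these are only cosmetic differences from your explicit labelling $v_0,\dots,v_{2n-1}$.
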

\begin{proof}
The row edges and column edges in $\langle T_{i} \cup T_{i+1} \rangle $ comprise the rim of the M\"obius ladder.
For each $(r,c,s) \in T_i$, we have $c-r=i$, $c+r = s$,
 and the only other solution to the system $\{ c'-r' \in \{i, i+1\}, c'+r'=s\} \pmod{n}$ is
$(r',c') = (r+\lfloor \frac{n}{2} \rfloor, c+\lceil \frac{n}{2} \rceil)$.
Therefore each symbol edge in $\langle T_{i} \cup T_{i+1} \rangle $ is a rung
joining an opposite pair of vertices along the rim of $\langle T_{i} \cup T_{i+1} \rangle$.
If $n=2m$ and $x \in T_i$, then
both $x'$ and $x''$ are nearly antipodal to $x$ since they are adjacent to $x+(m,m,0)$ along the rim of $\langle T_{i} \cup T_{i+1} \rangle$,
and $\{ x, x+(m,m,0)\}$ is a rung of $\langle T_{i} \cup T_{i+1} \rangle$.
\end{proof}\\
See Figure~\ref{fig:Z8Mobius} for an example.

\begin{figure}[ht]
\begin{center}
\adjustbox{scale=.9}{
\begin{tabular}
{|@{\hspace{3.1pt}}c@{\hspace{3.1pt}}|@{\hspace{3.1pt}}c@{\hspace{3.1pt}}|@{\hspace{3.1pt}}c@{\hspace{3.1pt}}|@{\hspace{3.1pt}}c@{\hspace{3.1pt}}|@{\hspace{3.1pt}}c@{\hspace{3.1pt}}|@{\hspace{3.1pt}}c@{\hspace{3.1pt}}|@{\hspace{3.1pt}}c@{\hspace{3.1pt}}|@{\hspace{3.1pt}}c@{\hspace{3.1pt}}|}
\hline
0     &  1     &   2     &   3      &    \textcolor{blue}{\large \bf \underline 4}      &   {\large \bf 5} &   6      &    7  \\ \hline
1     &  2     &   3     &  4      &     5      &   {\large \bf 6} &   {\large \bf 7}  &    0  \\ \hline
2     &  3     &   4     &   5      &     6      &    7     &    {\large \bf  0} &    {\large \bf 1}  \\  \hline
\textcolor{blue}{\large \bf \underline 3}    &  4     &   5     &   6      &     7     &    0     &   1      &   {\large \bf 2}  \\ \hline
 {\large \bf 4}&   {\large \bf 5}&   6     &    7      &     0      &    1     &   2      &    3  \\ \hline
5     &  {\large \bf 6}&    {\large \bf 7}&   0      &     1      &    2     &   3      &    4  \\ \hline
6     &  7     &    {\large \bf 0}&   {\large \bf 1} &     2      &    3     &   4      &    5  \\ \hline
7     &  0     &   1     &    {\large \bf 2} &     {\large \bf 3} &    4     &   5      &    6   \\ \hline
\end{tabular}
} 
\hspace{20mm}
\raisebox{-20mm}{\includegraphics[scale=.65]{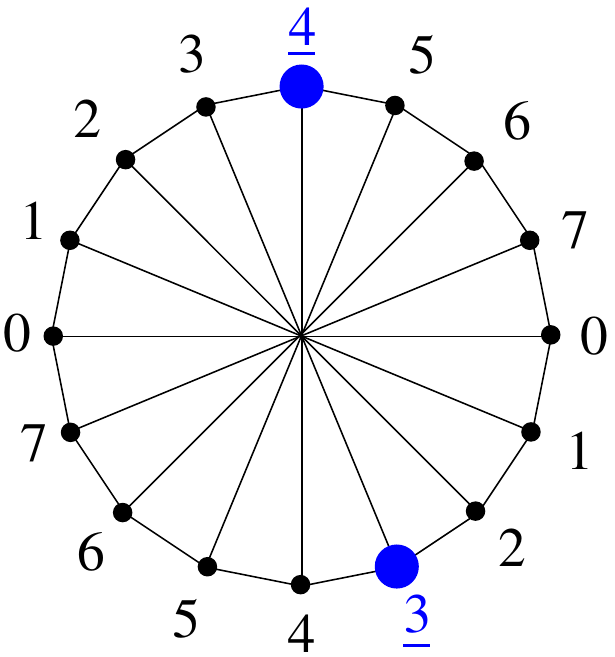}}\\
\smallskip
\hspace*{-1mm}(a) \hspace{54  mm} (b)
\caption{{For $L_{\Bbb{Z}_8}$ we indicate  $T_4 \cup T_5$ in bold face, and underline the near antipodal cells $x=(0,4,4)$ and $x''=x+(3,4,-1)=(3,0,3)$ in the M\"obius ladder $\langle T_4 \cup T_5 \rangle$.}}
\label{fig:Z8Mobius}
\end{center}
\end{figure}

\smallskip
\noindent
\textbf{Proof of Theorem \ref{thm: cyclic chromatic number}.}
If $n$ is odd it follows by Corollary~\ref{cor: admissible groups}(1). \\
If $n$ is even, by Corollary~\ref{cor: admissible groups}(3), it suffices to show that
$\Gamma(L_{{\Bbb Z}_{2m}})$ has a proper $(2m+2)$-coloring for $m\ge 1$.
This is true for $m=1$ since $\Gamma(L_{{\Bbb Z}_{2}}) \cong K_4$.
We assume $m\ge 2$ and let $k  = \lceil \frac {m}{2} \rceil$.
Using the notation $[t] = \{0, 1, \dots, t-1\}$, we define four sets of cells:
\begin{align*}
X &= \{ x_j \mid j \in [k]\},      & X' &= \{ x'_j \mid  j \in [k] \},             \\
Y &= \{ y_j \mid  j \in [m-k]\}, & Y' &= \{ y'_j \mid  j \in [m-k] \},
\end{align*}
where
\begin{align*}
\qquad\qquad x_j    &= 	(j,      3j     ,4j    ),               \qquad  & x'_j &= x_j + (d_r, d_c, d_s) , \quad\text{for $ j \in [k]$},     \\	
\qquad\qquad y_j    &= x_j + (0,2k,2k),                                    & y'_j &= y_j+(d_r, d_c, d_s) ,    \,\quad\text{for $ j \in [m-k]$},
\end{align*}
and where
\begin{equation*}
(d_r, d_c, d_s) = \begin{cases}
		(m,\,m+1,\,1) & \text{if $m \equiv 0 \pmod 3$,} \\
		(m-1,m,-1) & \text{if $m \not\equiv 0 \pmod 3$.}
	\end{cases}
\end{equation*}
Each of these  sets is (essentially)  a translation of $X$:
\begin{align}
X' &= X+(d_r, d_c, d_s) , \label{eq:X'}\\
Y \cup Y'  &= \begin{cases} 	(X \cup X') + (0,2k,2k) 			& \text{if $m$ is even,}	\\
					(X \cup X' \setminus \{x_k, x'_k\}) + (0,2k,2k) 	& \text{if $m$ is odd.}
		\end{cases}
		 \label{eq:YY'}\
\end{align}
See Figure~\ref{fig: Z12color} for examples.

\begin{figure}[h]
\begin{center}
\newcommand{\bfn}[1]{{\bf\large {\textcolor{red}{#1}}}}
\newcommand{\bfu}[1]{{\bf\large\textcolor{blue}{\underline {#1}}}}
\def\arraystretch{1.05}
\adjustbox{scale=.75}{
\begin{tabular}
{|@{\hspace{2pt}}c@{\hspace{2pt}}|@{\hspace{2pt}}c@{\hspace{2pt}}|@{\hspace{2pt}}c@{\hspace{2pt}}|@{\hspace{2pt}}c@{\hspace{2pt}}|@{\hspace{0pt}}c@{\hspace{0pt}}|@{\hspace{2pt}}c@{\hspace{2pt}}|@{\hspace{1pt}}c@{\hspace{1pt}}|@{\hspace{2pt}}c@{\hspace{2pt}}|@{\hspace{2pt}}c@{\hspace{2pt}}|@{\hspace{0pt}}c@{\hspace{0pt}}|@{\hspace{2pt}}c@{\hspace{2pt}}|@{\hspace{2pt}}c@{\hspace{1pt}}|}
  \hline
\bfn 0   & 1          & 2         &3              & 4           & 5    &\bfu 6  & 7         & 8        & 9           & 10     & 11\\ \hline
1          & 2           &3        &\bfn 4 & 5          & 6     & 7        & 8          & 9       & \bfu{10} & 11     & 0\\ \hline
\bfu 2   & 3          &4         & 5              & 6          & 7     &\bfn 8  & 9          & 10      & 11          & 0      & 1  \\  \hline
3          & 4           &5         & 6           & 7             & 8    & 9        & 10       & 11       & 0           & 1       & 2\\  \hline
4          & 5         &6         & 7              & 8           & 9    & 10       & 11       & 0          &  1         & 2        & 3 \\  \hline
5        & 6           &7         &8            & 9            & 10    & 11       & 0         & 1         &  2           & 3        & 4 \\  \hline
6        &\bfu 7    & 8         &9            &10           & 11   &  0         &\bfn 1   & 2         & 3            & 4        & 5 \\  \hline
7         & 8          &9         &10          & \bfu{11}  & 0     & 1         &2            &3         & 4           &\bfn 5   & 6 \\ \hline
8       &\bfn 9     &10       &11           & 0            & 1     & 2          &\bfu 3    & 4        & 5            & 6        & 7 \\  \hline
9        & 10          &11        &0           & 1           & 2     & 3          & 4           &5        & 6            & 7         & 8 \\  \hline
10      & 11           & 0         &1           & 2          & 3     & 4         & 5           & 6         & 7           & 8        & 9 \\  \hline
11        &  0        & 1         & 2            & 3          & 4      & 5         & 6          & 7         & 8           & 9        & 10 \\  \hline
\end{tabular}
\qquad\qquad
\begin{tabular}
{|@{\hspace{2pt}}c@{\hspace{1pt}}|@{\hspace{2pt}}c@{\hspace{2pt}}|@{\hspace{2pt}}c@{\hspace{2pt}}|@{\hspace{2pt}}c@{\hspace{2pt}}|@{\hspace{0pt}}c@{\hspace{0pt}}|@{\hspace{2pt}}c@{\hspace{2pt}}|@{\hspace{2pt}}c@{\hspace{2pt}}|@{\hspace{0pt}}c@{\hspace{0pt}}|@{\hspace{2pt}}c@{\hspace{2pt}}|@{\hspace{0pt}}c@{\hspace{0pt}}|@{\hspace{2pt}}c@{\hspace{2pt}}|@{\hspace{0pt}}c@{\hspace{0pt}}|@{\hspace{2pt}}c@{\hspace{2pt}}|@{\hspace{2pt}}c@{\hspace{2pt}}|}
  \hline
\bfn 0  &1           & 2      &3             & 4            & 5      &6           & 7           &\bfu 8    & 9            & 10         & 11          & 12      & 13   \\ \hline
1          &2          & 3      & \bfn4   & 5            & 6      &7          & 8            &9           & 10           &11         & \bfu{12}  & 13     & 0  \\ \hline
\bfu 2   & 3         & 4      &5              & 6           & 7      &\bfn 8   & 9            &10         & 11           & 12        & 13           & 0      & 1  \\  \hline
3          &4          & 5      &6              & 7           & 8      &9           &10           &11        & \bfn{12}   &13        & 0             &1       & 2\\  \hline
4          & 5         & 6      &7             & 8            & 9      &10        & 11           &12        & 13            & 0          &1           &2        & 3 \\  \hline
5          & 6         & 7      &8             & 9             & 10    &11       & 12          & 13         & 0            & 1           & 2          &3        & 4 \\  \hline
6          &\bfu 7   & 8       &9            &10            & 11    &12        &\bfn{13}  & 0            & 1          & 2           & 3           &4        & 5 \\  \hline
7           & 8        & 9       &10          &\bfu{11}   & 12     &13       & 0            & 1           & 2            &\bfn 3    & 4          &5        & 6  \\ \hline
8           & 9        & 10     &11          &  12          & 13    & 0         &\bfu 1      & 2           & 3           & 4           & 5          &6       &\bfn 7\\  \hline
9           & 10      &\bfn{11}& 12         & 13          & 0      & 1        & 2             & 3          & 4           & 5            & 6          &7        & 8\\  \hline
10         & 11       &12      & 13         & 0            & 1      & 2         & 3             &4          & 5           & 6            & 7          &8        & 9\\  \hline
11         & 12      & 13      & 0           & 1            & 2       & 3         & 4            &5          & 6            & 7           & 8           &9        & 10\\  \hline
12         & 13       & 0       & 1           & 2            & 3      & 4          & 5           &6           & 7            & 8           & 9          &10       & 11\\  \hline
13        &  0         & 1      & 2           & 3             & 4     & 5          & 6             &7          & 8             & 9          & 10         &11      & 12\\  \hline
\end{tabular}
} 
\caption{{The  cells $X \cup X'$ (\textcolor{red}{bold face}) and $Y \cup Y'$ (\textcolor{blue}{underlined bold face}) are shown for  $L_{\Bbb{Z}_{12}}$  and for $L_{\Bbb{Z}_{14}}$.}}
\label{fig: Z12color}
\end{center}
\end{figure}

\noindent
\textbf{Claim:}
Each of $X \cup X'$ and $Y \cup Y'$ is an independent set in the graph $\Gamma(L_{\Bbb{Z}_{2m}})$.

\noindent
\textbf{Proof of Claim:}
 By \eqref{eq:YY'} it suffices to prove the statement for
 $X \cup X' = \{ x_j, x'_j \mid  j \in [k] \}$.
 For any integer $t $, we define $\underline t \in [2m]$ to be the least positive residue of $t$ modulo $2m$.
 For  $j \in [k]$ we write $x_j = (r_j,c_j,s_j)$ and $x'_j = (r'_j,c'_j,s'_j)$, and define the following six multisets:
\begin{align*}
R &= \{\underline{r_0}, \underline{r_1}, \dots, \underline{r_{k-1}}\}, 	&	R' &= \{\underline{r'_0}, \underline{r'_1}, \dots, \underline{r'_{k-1}}\},	\\
C &= \{\underline{c_0}, \underline{c_1}, \dots, \underline{c_{k-1}}\},	&	C' &= \{\underline{c'_0}, \underline{c'_1}, \dots, \underline{c'_{k-1}}\},	\\
S &= \{\underline{s_0}, \underline{s_1}, \dots, \underline{s_{k-1}}\},	&	S' &= \{\underline{s'_0}, \underline{s'_1}, \dots, \underline{s'_{k-1}}\}.
\end{align*}
We aim to show that none of $ R \cup R'$, $C \cup C'$, $S \cup S'$ has a repeated entry.
For  $ j \in [k]$
each of the numbers $r_j = j$, $c_j = 3j$, $s_j = 4j$ is bounded above by $4(k-1) < 2m$, so we have
$$
\underline {r_j} = r_j = j, \qquad 
\underline {c_j}=c_j=3j,  \qquad 
\underline {s_j}= s_j=4j. 
$$
Therefore none of the lists $R$, $C$, $S$ has a repeated entry.
By \eqref{eq:X'}, none of the lists $R'$, $C'$, $S'$  has a repeated entry,
so it suffices to show that each of the sets $R \cap R'$, $C \cap C'$, $S \cap S'$ is empty.
First, since $m\ge2$ we have $k<m$ so $R \subseteq \{0,1,\dots,m-2\}$.
For $ j \in [k]$ we have $r'_j = r_j+d_r \in \{ r_j+m, r_j+(m-1) \} \subseteq \{m-1, m,\dots,2m-1\}$.
Therefore $\underline {r'_j}=r'_j$ and \\ $R \cap R' = \emptyset$.
Second, for $ j \in [k]$ we have $c'_j = c_j+d_c \in \{c_j+(m+1), c_j+m\}$.
Therefore $0 \le c'_j  < 4m$ and
$$
\underline {c'_j} = 
\begin{cases}
		 3j + d_c, &	\text{if $c_j+d_c < 2m $} \\
		 3j + d_c  -2m, &	\text{if $ c_j+d_c \ge 2m$}.
		 	\end{cases}
$$
 If $m \equiv 0 \pmod{3}$, then $d_c =m +1 $ so $\underline {c'_j} \equiv  1 \pmod{3}$.
If $m \not\equiv 0  \pmod{3}$, then $d_c = m $ so $\underline {c'_j} \equiv  \pm m \pmod{3}$.
In both cases we find $\underline {c'_j} \not\equiv 0 \equiv \underline {c_j} \pmod{3}$ for $ j \in [k]$.
Therefore $C \cap C' = \emptyset$.
%
Third, for $ j \in [k]$ we have $s'_j = s_j \pm1$ so $s_j$ is even and $s'_j$ is odd.
Since $2m$ is even, we conclude that $\underline{s_j}$ is even and $\underline{s'_j}$ is odd for every $ j \in [k]$.
Therefore $S \cap S'=\emptyset$ and the claim is proved.

\smallskip

For every $j \in [m]$ we apply Proposition \ref{prop:moeb} to the $2$-plex
$$
M_j := T_{2j} \cup T_{2j+1},
$$
 where $T_i$ is the $i$th right diagonal of  $L_{{\Bbb Z}_{2m}}$.
Each induced subgraph $\langle M_{j} \rangle$ is a M\"obius ladder.
For  $j \in [k]$ we  have that $x_j \in T_{2j}$,
so the cell $x'_j = x_j+(d_r, d_c, d_s)$ is nearly antipodal to $x_j$ in  $\langle M_{j} \rangle$.
Similarly, for  $j \in [m-k]$ we find that $y_j$ and $y'_j$ are nearly antipodal vertices of $\langle M_{j+k} \rangle$.
We now consider the following partition of  $L_{{\Bbb Z}_{2m}}$ into $m+1$ parts.
$$
 \{  M_{j} -\{x_j,x'_j\} \mid   j \in [k] \} \cup \{  M_{j+k} -\{y_j,y'_j\} \mid   j \in [m-k]  \}  \cup \{X \cup X' \cup Y \cup Y'\}
$$
By Proposition \ref{prop:bipartite} and the above claim, each part
induces a bipartite subgraph of~$\Gamma(L_{{\Bbb Z}_{2m}})$.
We conclude that $\Cchi(L_{{\Bbb Z}_{2m}}) \le 2m+2$.
\hspace{\fill}$\rule{1.15ex}{1.15ex}$\\
See Figure~\ref{fig: Z6color} for some examples.

%

\begin{figure}[h]
\begin{center}

\newcommand{\rn}[1] {\textcolor{red} {\textbf #1}}
\newcommand{\bn}[1]{\textcolor{blue}{\textbf #1}}
\def\arraystretch{1.0}

\adjustbox{scale=.90}{
\begin{tabular}
{|@{\hspace{2pt}}c@{\hspace{1.5pt}}|@{\hspace{1.5pt}}c@{\hspace{1.5pt}}|@{\hspace{1.5pt}}c@{\hspace{1.5pt}}|@{\hspace{1.5pt}}c@{\hspace{1.5pt}}|}\hline
\m{0}{\rn{4}}  & \m{1}{1}     &\m{2}{\bn5}   &\m{3}{3} \\ \hline
\m{1}{\bn5}    & \m{2}{0}     &\m{3}{\rn4}   & \m{0}{2} \\ \hline
\m{2}{3}         & \m{3}{2}     &\m{0}{1}       &\m{1}{0} \\\hline
\m{3}{0}         & \m{0}{3}     &\m{1}{2}       & \m{2}{1}  \\\hline
\end{tabular}
\qquad
\begin{tabular}
{|@{\hspace{1.5pt}}c@{\hspace{1.5pt}}|@{\hspace{1.5pt}}c@{\hspace{1.5pt}}|@{\hspace{1.5pt}}c@{\hspace{1.5pt}}|@{\hspace{1.5pt}}c@{\hspace{1.5pt}}|@{\hspace{1.5pt}}c@{\hspace{1.5pt}}|@{\hspace{1.5pt}}c@{\hspace{1.5pt}}|}
\hline
\m{0}{\rn6} & \m{1}{1}     & \m{2}{3}      & \m{3}{2}       & \m{4}{\bn7} & \m{5}{5} \\ \hline
\m{1}{5}     & \m{2}{0}     & \m{3}{1}      & \m{4}{\rn 6}  & \m{5}{3}      & \m{0}{4} \\ \hline
\m{2}{4}     & \m{3}{5}     & \m{4}{0}      & \m{5}{1}       & \m{0}{2}      & \m{1}{3} \\ \hline
\m{3}{3}     & \m{4}{4}     & \m{5}{\bn7} & \m{0}{0}       & \m{1}{\rn6}  & \m{2}{2} \\ \hline
\m{4}{2}     & \m{5}{\rn6} & \m{0}{5}      & \m{1}{4}       & \m{2}{1}      & \m{3}{0} \\ \hline
\m{5}{0}     & \m{0}{3}     & \m{1}{2}      & \m{2}{5}       & \m{3}{4}      & \m{4}{1} \\ \hline
\end{tabular}
\qquad
\begin{tabular}
{|@{\hspace{1.5pt}}c@{\hspace{1.5pt}}|@{\hspace{1.5pt}}c@{\hspace{1.5pt}}|@{\hspace{1.5pt}}c@{\hspace{1.5pt}}|@{\hspace{1.5pt}}c@{\hspace{1.5pt}}|@{\hspace{1.5pt}}c@{\hspace{1.5pt}}|@{\hspace{1.5pt}}c@{\hspace{1.5pt}}|@{\hspace{1.5pt}}c@{\hspace{1.5pt}}|@{\hspace{1.5pt}}c@{\hspace{1.5pt}}|}
\hline
\m{0}{\rn8} & \m{1}{1}     & \m{2}{2}      & \m{3}{3}       & \m{4}{\bn9} & \m{5}{5}    & \m{6}{6} & \m{7}{7}     \\ \hline
\m{1}{6}     & \m{2}{0}     & \m{3}{1}      & \m{4}{\rn 8}  & \m{5}{2}      & \m{6}{4}    & \m{7}{5} & \m{0}{\bn9} \\ \hline
\m{2}{7}     & \m{3}{6}     & \m{4}{0}      & \m{5}{1}       & \m{6}{3}      & \m{7}{2}    & \m{0}{4} & \m{1}{5}     \\ \hline
\m{3}{\bn9}& \m{4}{7}     & \m{5}{6}      & \m{6}{0}       & \m{7}{\rn8}  & \m{0}{3}    & \m{1}{2} & \m{2}{4}     \\ \hline
\m{4}{5}     & \m{5}{4}     & \m{6}{7}      & \m{7}{\bn9}  & \m{0}{1}      & \m{1}{0}    & \m{2}{3} & \m{3}{\rn8}  \\ \hline
\m{5}{3}     & \m{6}{5}     & \m{7}{4}      & \m{0}{6}       & \m{1}{7}      & \m{2}{1}    & \m{3}{0} & \m{4}{2}     \\ \hline
\m{6}{2}     & \m{7}{3}     & \m{0}{5}      & \m{1}{4}       & \m{2}{6}      & \m{3}{7}    & \m{4}{1} & \m{5}{0}     \\ \hline
\m{7}{0}     & \m{0}{2}     & \m{1}{3}      & \m{2}{5}       & \m{3}{4}      & \m{4}{6}    & \m{5}{7} & \m{6}{1}     \\ \hline
\end{tabular}
}
\caption{{ Colorings of $L_{\Bbb{Z}_{4}}$, $L_{\Bbb{Z}_{6}}$, and $L_{\Bbb{Z}_{8}}$ by the method of Theorem~\ref{thm: cyclic chromatic number}. The colors are given as subscripts.}}
\label{fig: Z6color}
\end{center}
\end{figure}

\section{Latin squares of small orders}
We have computed $\Cchi({\bf L})$ for all main classes of  order $n$,  $2 \le n \le 7$.
N.~Shajari has verified by computer that all 283657 main classes of Latin squares of order $8$ as listed in \cite{MR2291523} are 10-colorable.
 The results are summarized in Table \ref{tbl: chromatic_number}.

\def\arraystretch{1}
\begin{center}
\begin{tabular}{|c|c|l|c|}
\hline
$n$  & {Number of main classes}   & {Main class number} as listed in \cite{MR2246267}    		& $\Cchi$ \\ \hline\hline
2    				& 1                          		& 2.1  ($ L_{\Bbb{Z}_2}$)                         			& 4      \\ \hline\hline
3    				& 1                          		& 3.1  ($ L_{\Bbb{Z}_3}$)                       			& 3      \\ \hline\hline
\multirow{2}{*}{4} 	& \multirow{2}{*}{2} 		& 4.2  ($L_{\Bbb{Z}_2 \times\Bbb{Z}_2}$) 			& 4     \\\cline{3-4}
      				&                           		& 4.1  ($ L_{\Bbb{Z}_4}$)                 	    			& 6     \\ \hline\hline
\multirow{2}{*}{5}  	& \multirow{2}{*}{2}		& 5.1      ($L_{\Bbb{Z}_5}$)                    			& 5    \\ \cline{3-4}
     				&                             		& 5.2                                                    			& 6     \\ \hline\hline
\multirow{2}{*}{6}    	&\multirow{2}{*}{12}		& 6.2, 6.3, 6.4, 6.5, 6.10, 6.11              			& 7    \\  \cline{3-4}
     				&                            		& 6.1, 6.6, 6.7, 6.8, 6.9, 6.12                   			& 8     \\ \hline\hline
\multirow{2}{*}{7}    	& \multirow{2}{*}{147}  	& 7.3, 7.6, 7.7, 7.71, 7.105, 7.137       			& 7     \\ \cline{3-4}
     				&                            		&  All other main classes of order 7     			& 8     \\ \hline\hline
8   				& 283657                  		&All main classes as listed in \cite{MR2291523}	& $\le 10$   \\  \hline

\end{tabular}
\vspace*{-5mm}
\begin{table}[ht]
\caption{{ Chromatic numbers of Latin squares of order $n$,  $2 \le n \le 8$.}}
\label{tbl: chromatic_number}
\end{table}
\end{center}
%
%
%
%
%


 \noindent
{\bf Acknowledgements.}
The work of  Luis Goddyn was supported by  NSERC Canada,  while  E.S. Mahmoodian was visiting Simon Fraser University and  was partially supported by INSF.We also are grateful to the referees for their constructive input.  \\ 

 \noindent
{\bf Addendum.}
One of the referees for this paper asserts that  $\Cchi( \mathbf L )$ has been studied before by at least four different groups or individuals, he/she has discussed the invariant with some of these groups, and has seen it mentioned in several conference talks, but knows of no written work on the topic.
Another referee informs us that some of our results are contained in a paper by Nicholas Cavenagh and Jaromy Kuhl that is
the same concept but they call it ``chromatic index'' rather
than ``chromatic number'', and is
about to be published in \emph{Contributions to Discrete Mathematics} 12
(2016), issue 2.\\ \vspace*{5mm}

\noindent
{\bf\large{References}}

In the references, each reference is followed by a number which is
the page on which the reference is cited.
\def\cprime{$'$}


 \vspace*{5mm}

\noindent
{\tt nbesharati@pnu.ac.ir,
 goddyn@sfu.ca,
 emahmood@sharif.edu, \\
m.mortezaeefar@gmail.com}

\end{document}